\newcommand{\Irrep}{\text{Irrep}}
\newcommand{\llambda}{\mathbold{\lambda}}
\newcommand{\mmu}{\mathbold{\mu}}
\definecolor{lightgreen}{rgb}{0,0.8,0}
\definecolor{darkgreen}{rgb}{0,0.3,0}
\definecolor{lightblue}{rgb}{0,0,0.65}
\definecolor{darkblue}{rgb}{0,0,0.4}
\definecolor{lightred}{rgb}{0.8,0,0}
\definecolor{darkred}{rgb}{0.3,0,0}
\newtheorem{thm}{Theorem}[section]
\newtheorem{lem}[thm]{Lemma}
\newtheorem{prop}[thm]{Proposition}
\newtheorem{cor}[thm]{Corollary}
\theoremstyle{definition}
\newtheorem{defn}{Definition}
\newcommand{\Ind}{\text{Ind}}
\title{Tensor Quasi-Random Groups}
\author{Mark Sellke}
\date{}							
\begin{document}
\maketitle

\abstract{\noindent In \cite{qrgroups}, Gowers elegantly characterized the finite groups $G$ in which $A_1A_2A_3=G$ for any positive density subsets $A_1,A_2,A_3$. This property, \emph{quasi-randomness}, holds if and only if $G$ does not admit a nontrivial irreducible representation of constant dimension. We present a dual characterization of \emph{tensor quasi-random} groups in which multiplication of subsets is replaced by tensor product of representations.}

\section{Introduction}

Many large finite groups $G$ exhibit expansion and mixing phenomena. The former states that product sets $A_1A_2\subseteq G$ are always significantly larger than either of $A_1,A_2\subseteq G$. The latter states that when $A_1,A_2,A_3\subseteq G$ are fairly large, then the number of solutions to $a_1a_2=a_3$ for $a_i\in A_i$ is close to $\frac{|A_1A_2A_3|}{|G|^2}.$ Such properties have attracted a lot of interest and are related to theoretical computer science via the notion of expander graphs \cite{margulis1988explicit,lubotzky1988ramanujan}. Seminal papers have established such properties for certain simple groups, see for instance \cite{helfgott2008growth,bourgain2012expansion,breuillard2013expansion,pyber2016growth,eberhard2016product}. 

The present work is inspired by a striking result of \cite{qrgroups} which qualitatively characterizes the groups $G$ that exhibit expansion on large scales. More precisely, it characterizes the groups for which any three subsets $A_1,A_2,A_3$ of constant density must multiply to cover all of $G$. This is equivalent to stating that if $|A_1|,|A_2|=\Omega(|G|)$ then $|A_1A_2|=(1-o(1))|G|$. Throughout we consider the regime of large groups with $|G|\to\infty$ as all other parameters are fixed. We use the standard notations $A=O(B)$ or $B=\Omega(A)$ to indicate that $\frac{A}{B}$ is bounded, and $A=o(B)$ to indicate that $\frac{A}{B}$ tends to $0$.

\begin{thm}[\cite{qrgroups}]

\label{thm:QR}

Let $G$ be a finite group. The following are asymptotically equivalent up to dependence of constants, and define a quasi-random group:

\begin{enumerate}

\item $G$ has no $O(1)$-dimensional non-trivial irreducible representations.

\item If $A_1,A_2,A_3\subseteq G$ each have size $\Omega(|G|)$, then $A_1\cdot A_2\cdot A_3=G$.

\item Fix any constant $m\geq 3$. If $A\subseteq G$ has $|A|=\Omega(|G|)$, then $A^m=A\cdot A\cdot \dots\cdot A=G$.

\item $G$ has neither an $O(1)$-size nor an abelian non-trivial quotient.

\end{enumerate}

\end{thm}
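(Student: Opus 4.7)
The plan is to prove the equivalences via the chain $(1) \Leftrightarrow (4)$ and $(1) \Rightarrow (2) \Rightarrow (3) \Rightarrow (1)$. The main content is a Fourier-analytic mixing estimate that converts the representation-theoretic condition (1) into the combinatorial condition (2).

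For $(1) \Leftrightarrow (4)$, I would invoke Jordan's theorem on finite subgroups of $GL_d(\mathbb{C})$. A non-trivial quotient of $G$ that is either bounded or abelian immediately furnishes a non-trivial irrep of $G$ of dimension $O(1)$ by pullback. Conversely, given a non-trivial irrep $\rho : G \to GL_d(\mathbb{C})$ with $d = O(1)$, Jordan's theorem produces a normal abelian subgroup $A \leq \rho(G)$ of index $O_d(1)$; its preimage $\rho^{-1}(A) \trianglelefteq G$ either has bounded index (giving a bounded quotient) or equals $G$, in which case $\rho(G) = A$ is abelian (giving an abelian quotient).

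For $(1) \Rightarrow (2)$, I would use non-abelian Fourier analysis on $G$. Writing $M_X := \sum_{g \in X}\rho(g) \in \mathbb{C}^{d_\rho \times d_\rho}$ and combining Fourier inversion with $\widehat{f \ast g}(\rho) = \hat{f}(\rho)\hat{g}(\rho)$, one has
\[
\bigl|\{(a_1,a_2,a_3) \in A_1 \times A_2 \times A_3 : a_1 a_2 a_3 = x\}\bigr| = \frac{1}{|G|}\sum_{\rho \in \hat{G}} d_\rho \, \mathrm{tr}\bigl(\rho(x^{-1}) M_{A_1} M_{A_2} M_{A_3}\bigr),
\]
where the trivial representation contributes the expected count $|A_1||A_2||A_3|/|G|$. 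For the error, I would apply $|\mathrm{tr}(XY)| \leq \|X\|_{HS}\|Y\|_{HS}$ together with $\|XY\|_{HS} \leq \|X\|_{op}\|Y\|_{HS}$ to peel the three matrix factors into two Hilbert--Schmidt norms and one operator norm, then use the Plancherel consequence $\|M_X\|_{HS}^2 \leq |G||X|/d_\rho$ (hence $\|M_X\|_{op} \leq \sqrt{|G||X|/d_\rho}$), and close with Cauchy--Schwarz over $\rho \neq 1$. The resulting pointwise error estimate $O\bigl(|G|^{1/2}\sqrt{|A_1||A_2||A_3|/d_{\min}(G)}\bigr)$ is dominated by the main term whenever $d_{\min}(G) \gtrsim \alpha^{-3}$ for densities $|A_i|/|G| \geq \alpha$, and hypothesis (1) forces $d_{\min}(G) \to \infty$. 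The implication $(2) \Rightarrow (3)$ is immediate since $A^m \supseteq A \cdot A \cdot A$.

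For $(3) \Rightarrow (1)$, I would argue the contrapositive via (4). If $\pi: G \twoheadrightarrow Q$ is a non-trivial quotient with $|Q| = O(1)$, take $A := \ker \pi$: then $|A|/|G| = 1/|Q| = \Omega(1)$ while $A^m = A \neq G$. For a non-trivial abelian quotient $Q$, pick a non-trivial character $\chi : Q \to \mathbb{C}^\times$ with image of order $k$. If $k \leq 2m$ we reduce to the previous case via $Q/\ker\chi$ of size $k = O(1)$; if $k > 2m$ we take $B := \chi^{-1}(\{\omega^j : 0 \leq j < \lfloor k/m \rfloor\})$ with $\omega = e^{2\pi i/k}$, so that $\chi(B^m)$ lies in an arc of $k$-th roots of unity missing some $\omega^s$, and $A := \pi^{-1}(B)$ has density $\geq 1/(2m) = \Omega(1)$ with $A^m \neq G$. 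The main obstacle is locating exactly the right matrix-norm splitting in $(1) \Rightarrow (2)$: the three factors must be grouped so the Hilbert--Schmidt bounds pair cleanly with Plancherel while a single operator-norm factor supplies the crucial $1/\sqrt{d_{\min}(G)}$ savings.
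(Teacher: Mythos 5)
Your outline is essentially sound and, for the core implication $(1)\Rightarrow(2)$, follows the same Fourier-analytic route as Gowers (and Babai--Nikolov--Pyber): the Plancherel bound $\|M_A(\rho)\|_{op}\leq\|M_A(\rho)\|_{HS}\leq\sqrt{|G||A|/d_\rho}$ gives the crucial $d_{\min}^{-1/2}$ savings, and grouping the trace as (operator norm)$\times$(HS)$\times$(HS) with one Cauchy--Schwarz over $\rho\neq 1$ closes the estimate exactly as you indicate. The Jordan-theorem argument for $(1)\Leftrightarrow(4)$ is also a correct and standard route, though the paper does not prove this itself: it simply cites Gowers for the equivalence of (1), (2 in its non-covering form), and (4), and then in the Appendix only supplies the reduction to the covering form and the missing implication $(3)\Rightarrow(1)$.

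One genuine slip: the justification of $(2)\Rightarrow(3)$ is wrong as written. The inclusion $A^m\supseteq A\cdot A\cdot A$ need not hold for an arbitrary dense $A$ (it fails already if $A$ does not contain the identity). The correct one-line fix: by $(2)$ applied with $A_1=A_2=A_3=A$ we get $A^3=G$, and then $A^m=A^{m-3}\cdot A^3=A^{m-3}\cdot G=G$ for every $m\geq 3$ (since $S\cdot G=G$ for any nonempty $S$). The conclusion is still trivial to obtain; only the stated reason is incorrect.

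Your route for $(3)\Rightarrow(1)$ goes through $(4)$: given a small or abelian quotient, you build an explicit dense $A$ with $A^m\neq G$ (kernel in the bounded case, an arc preimage in the abelian case). This is correct. The paper instead proves $(3)\Rightarrow(1)$ directly: from a nontrivial $\varphi:G\to U(k)$ with $k=O(1)$, take $A=\varphi^{-1}$ of a small identity neighborhood in $U(k)$. Then every element of $\varphi(A^m)$ has trace close to $k$, but since the average of $\mathrm{tr}\,\varphi(g)$ over $g\in G$ is $0$ (for nontrivial $\varphi$) some element of $\varphi(G)$ has negative trace, so $A^m\neq G$; a volume/pigeonhole argument on cosets of $A$ gives $|A|/|G|=\Omega_{k,m}(1)$. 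The paper's version avoids any appeal to Jordan's theorem and handles bounded and abelian obstructions uniformly; yours is arguably more elementary in the abelian case and exhibits the obstruction very concretely. Both are fine.
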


We explain in the Appendix why the above equivalence follows from \cite{qrgroups} as it is not stated directly. It follows from criterion $4$ that large non-abelian simple groups are quasi-random. This implies for instance that given a subset $S\subseteq G$ for $G$ simple, to show that $S^k=G$ for $k$ not too large, it suffices to show that a small power of $S$ has macroscopic $\Omega(|G|)$ size. Such an argument was used in \cite{babai2008product} to simplify the proof of \cite{helfgott2008growth}. Of course the result above is interesting in its own right; the original application was to that constant-density product-free sets do not exist in general finite groups.

 Our purpose is to study a dual problem: given large $G$-representations $V_i$, when must $V_1\otimes V_2\otimes V_3$ contain all irreducible $G$-representations as subrepresentations? In such a case we say this tensor product \emph{covers} $\Irrep(G)$, the set of (isomorphism classes of) irreducible $G$-representations. Another line of working on covering $\Irrep(G)$ is the Saxl conjecture, which asserts that $\Irrep(S_n)$ can be covered by a tensor square for $n$ large enough - see \cite{Pak,IkenDom,li2018,LuoSellke}. The work \cite{steinbergsquare} establishes such a result in groups of Lie type, and \cite{liebeck,sellke20,liebeck2021} study the number of tensor powers of a fixed irreducible representation needed to cover $\Irrep(G)$ in various cases. As we allow our tensor factors $V_i$ to be reducible, it is not obvious how to best measure their size. We will use the Plancherel measure.

\begin{defn}

For $G$ a finite group, the Plancherel measure $M_G$ is a probability distribution on $\Irrep(G)$ which assigns the irreducible representation $\llambda$ probability $M_G(\llambda)=\frac{\dim(\llambda)^2}{|G|}$. For an arbitrary finite-dimensional $G$-representation $V$, let $M_G(V)$ denote the Plancherel measure of the set of distinct (up to isomorphism) irreducible subrepresentations $\llambda\subseteq V$.

\end{defn}

Our main result, a dual version of Theorem~\ref{thm:QR}, characterizes which groups exhibit good tensor product expansion on large scales.

\begin{thm}

\label{thm:TQR}

Let $G$ be a finite group. The following are asymptotically equivalent up to dependence of constants, and define a tensor quasi-random or TQR group: 

\begin{enumerate}

\item \label{item:nosmallconj} $G$ contains no $O(1)$ sized non-trivial conjugacy class.

\item \label{item:tensorprod} If $V_1,V_2,V_3$ are $G$-representations with $M_G(V_i)=\Omega(1)$ for all $i$, then $V_1\otimes V_2\otimes V_3$ covers $\Irrep(G)$.

\item \label{item:tensorpower} Fix $m\geq 3$. If $V$ is a $G$-representation with $M_G(V)=\Omega(1)$, then $M_G(V^{\otimes m})>\frac{1}{2}$.

\item \label{item:subgroup} $G$ contains neither a $O(1)$ size nontrivial normal subgroup, nor a $O(1)$ index normal subgroup (possibly equal to $G$) with non-trivial center.

\end{enumerate}

\end{thm}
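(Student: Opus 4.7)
The plan is to close the cycle by verifying $(1)\Leftrightarrow(4)$ as pure group theory, $(2)\Rightarrow(3)$ essentially tautologically, then the main character-theoretic step $(1)\Rightarrow(2)$, and finally $(3)\Rightarrow(1)$ by contrapositive using (4).

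First, $(1)\Leftrightarrow(4)$ is purely group-theoretic. For $(4)\Rightarrow(1)$: any nonidentity element of a small normal subgroup has small conjugacy class, and for any $z\in Z(N)\setminus\{1\}$ with $N\lhd G$ of bounded index, $C_G(z)\supseteq N$ forces $|z^G|\leq[G:N]=O(1)$. Conversely, given $|g^G|=O(1)$, the normal core $M$ of $C_G(g)$ has index at most $|g^G|^{|g^G|}$; every $G$-conjugate of $g$ centralises $M$, so $M\cdot\langle g^G\rangle$ is a bounded-index normal subgroup with $M$ in its centre, delivering the desired structure unless $M=1$ (in which case $G$ itself is of bounded size). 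The implication $(2)\Rightarrow(3)$ is essentially tautological: (2) with $V_1=V_2=V_3=V$ gives $V^{\otimes 3}=\Irrep(G)$, and induction on $m\geq 3$ (using that $\mu\otimes V^*$ is nonzero for any $\mu$, together with the trivial representation appearing in $V^{\otimes 3}$) shows $V^{\otimes m}$ covers $\Irrep(G)$ as well, so $M_G(V^{\otimes m})=1>1/2$.

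The main step is $(1)\Rightarrow(2)$, which I plan to prove by dualising Gowers' Fourier argument to the character side. Writing $V_i=\bigoplus_{\lambda\in S_i}\lambda$ and fixing $\mu\in\Irrep(G)$,
$$\langle\mu,V_1\otimes V_2\otimes V_3\rangle=\frac{\dim\mu\cdot\prod_i\dim V_i}{|G|}+\frac{1}{|G|}\sum_{g\neq 1}\overline{\chi_\mu(g)}\prod_{i=1}^3\chi_{V_i}(g).$$
The identity contribution is $\Omega(\dim\mu\cdot|G|^{1/2})$ because $\dim V_i\geq\sqrt{M_G(V_i)|G|}$. To show the $g\neq 1$ sum is asymptotically smaller, one uses column orthogonality $\sum_\lambda|\chi_\lambda(g)|^2=|C_G(g)|=|G|/|g^G|$, which under (1) is $o(|G|)$ uniformly for $g\neq 1$; combining this with H\"older/Parseval bounds on $\sum_{g\neq 1}\prod_i|\chi_{V_i}(g)|$ despite the support sizes $|S_i|$ potentially being as large as $|G|$ is the technical heart, and I expect this to be the main obstacle.

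For $(3)\Rightarrow(1)$ I use the contrapositive via (4). In case (a), a small nontrivial normal subgroup $Z$ of size $k\geq 2$ gives $V=\bigoplus_{\lambda\in\Irrep(G/Z)}\lambda$ with $M_G(V)=1/k=\Omega(1)$ but $M_G(V^{\otimes m})\leq 1/k\leq 1/2$ since $V^{\otimes m}$ still has $Z$ acting trivially. In case (b), $G$ has an $O(1)$-index normal $N$ with $Z(N)\neq 1$ and no small normal subgroup; then $A=Z(N)$ is a large abelian normal subgroup of $G$ centralised by $N$, so $G/C_G(A)$ is a bounded group acting on $A$. Clifford theory identifies the $M_G$-mass over $G$-invariant $\Phi\subseteq\hat A$ with $|\Phi|/|A|$, and since $G/C_G(A)$ being bounded rules out transitivity on $\hat A\setminus\{0\}$ (which would force $|A|$ bounded), an additive-combinatorial construction in $\hat A$ supplies a $G$-invariant $\Phi$ with $|\Phi|=\Omega(|A|)$ but $|\Phi^m|\leq|A|/2$, yielding a $V$ violating (3).
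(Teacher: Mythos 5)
Your proposal has the right overall architecture but contains one admitted gap at the crux and two smaller errors, so it is not a complete proof.

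The central implication \ref{item:nosmallconj}$\Rightarrow$\ref{item:tensorprod} is left open, and you say so yourself. The paper resolves exactly the obstacle you flag (``the support sizes $|S_i|$ potentially being as large as $|G|$'') by replacing the raw characters $\chi_{V_i}$ with the reduced characters $\widetilde\chi^{V_i}(g)=\frac{1}{|G|}\sum_{\llambda\in V_i}\dim(\llambda)\chi^\llambda(g)$. These satisfy $\widetilde\chi^{V_i}(e)=M_G(V_i)$ and $|\widetilde\chi^{V_i}|_2=\sqrt{M_G(V_i)}\le 1$ regardless of $|S_i|$, and, crucially, $|\widetilde\chi^{V_i}_0|_\infty\le c(G)^{-1/2}$ because a class function with $\ell^2$-norm at most $1$ that is constant on conjugacy classes of size at least $c(G)$ cannot exceed $c(G)^{-1/2}$ anywhere off the identity. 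H\"older with exponents $\tfrac12+\tfrac12+\tfrac1\infty=1$ then bounds $|\widetilde\chi^{V_1}_0\otimes\widetilde\chi^{V_2}_0\otimes\widetilde\chi^{V_3}_0|_1\le c(G)^{-1/2}$, which is beaten by the identity contribution $M_G(V_1)M_G(V_2)M_G(V_3)=\Omega(1)$. This is precisely the dual of Gowers' $\ell^\infty$ bound, and without it your sketch does not close.

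Two further points. In your argument for \ref{item:nosmallconj}$\Rightarrow$\ref{item:subgroup}, the assertion that $M\cdot\langle g^G\rangle$ ``has $M$ in its centre'' is false in general: $M$ commutes with $\langle g^G\rangle$ but need not be abelian, so $M\not\subseteq Z(M\langle g^G\rangle)$. What is true, and what you need, is that $M\cap\langle g^G\rangle$ is central in $M$ (it lies in $M$ and commutes with $M$); to see this intersection is nontrivial you must invoke the hypothesis that $G$ has no small normal subgroup (so $|\langle g^G\rangle|>[G:M]$), giving $|M\cap\langle g^G\rangle|\ge|\langle g^G\rangle|/[G:M]>1$, as the paper does with $K=H\cap N$. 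Finally, for \ref{item:tensorpower}$\Rightarrow$\ref{item:subgroup} your Clifford-theory reduction to $\hat A$ is the right move, but ``an additive-combinatorial construction supplies a $G$-invariant $\Phi$ with $|\Phi|=\Omega(|A|)$ and $|\Phi+\cdots+\Phi|\le|A|/2$'' is a nontrivial lemma, not a consequence of the non-transitivity remark; the paper proves it via a covering bound for $m$-fold sumsets of bounded sets (Lemma~\ref{lem:additive combo}) and an iterative orbit-closure algorithm (Lemma~\ref{lem:sumset}), and you would need something of comparable substance.
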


While \cite{qrgroups} turns subsets $A\subseteq G$ into representation-theoretic data via the Fourier transform, we turn representations into class functions via their characters. The crucial lowest dimension of a non-trivial irreducible representation is for us replaced by the smallest non-trivial conjugacy class of $G$. In both situations, a key insight is that if all nontrivial irreducible representations or conjugacy classes are large, then an appropriate $\ell^{\infty}$ norm must be quite small - see the proof of Theorem~\ref{thm:22infty}.

We remark that from the fourth criterion in Theorems~\ref{thm:TQR} it follows that large simple groups are also TQR. In \cite{liebeck} it was conjectured that for simple groups, any irreducible representation $\llambda$ requires only $O\left(\frac{\log|G|}{\log\dim\llambda}\right)$ tensor powers to cover $\Irrep(G)$ - this is an easy lower bound since covering $\Irrep(G)$ requires large dimension. This conjecture was proved there in bounded rank groups of Lie type, and then for $S_n$ in \cite{sellke20} and subsequently $A_n$ in \cite{liebeck2021}. Because simple groups are TQR, to establish the conjecture it suffices to show that a small tensor power of $\llambda$ has Plancherel measure $\Omega(1)$ (or even a bit smaller depending on the group in question), at which point one could apply Theorem~\ref{thm:TQR} to finish. This idea was used in \cite{babai2008product} to simplify the landmark result of \cite{helfgott2008growth} on covering $SL_2(\mathbb Z/p\mathbb Z)$ by products of subsets. As in \cite{babai2008product}, our methods do not seem helpful for showing growth at small scales, but are only able to show that products of large representations quickly cover everything.

In Section~\ref{sec:markov} we specialize our results to the \emph{tensor product Markov chains} studied in \cite{fulman,fulman2,fulman3,markov}. The result is that the tensor quasi-randomness of a group $G$ characterizes whether certain tensor product chains mix in constant time. 

\begin{cor}
\label{cor:mix}
Associate with a $G$-representation $V$ its reduced representation $\widetilde V$ as in Definition~\ref{defn:reduced}. For any $\varepsilon>0$, if $G$ is large and TQR, then when $M_{G}(V)=\Omega(1)$ the uniform $\varepsilon$-mixing time of the tensor product Markov chain given by $\cdot\otimes \widetilde V$ is at most $3$ . Conversely if $G$ is large and not TQR then the total variation $\frac{1}{4}$-mixing times of the chains $\cdot\otimes \widetilde V$ are arbitrarily large for suitable $G$-representations $V$.

\end{cor}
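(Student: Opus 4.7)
The plan is to diagonalize the chain and reduce both directions to estimates on its eigenvalues. Writing the transition probabilities as $P(\mu,\nu)=\dim(\nu)\cdot[\mu\otimes\widetilde V:\nu]/(\dim\mu\,\dim\widetilde V)$, one checks using $\sum_\mu\dim(\mu)\chi_\mu(g)=|G|\delta_{g,e}$ that $M_G$ is stationary, and that for each conjugacy class $[h]$ of $G$ the class function $\phi_h(\lambda):=\chi_\lambda(h)/\dim\lambda$ is an eigenfunction of $P$ with eigenvalue $\phi_h(\widetilde V)=\chi_{\widetilde V}(h)/\dim\widetilde V$. Column orthogonality makes the $\phi_h$ an orthogonal basis of $L^2(M_G)$, so they fully diagonalize the chain; both directions then reduce to controlling $\phi_h(\widetilde V)$ for $h\neq e$.

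For the forward direction, assume $G$ is TQR with $M_G(V)=\Omega(1)$. Expanding multiplicities via characters gives
\[
\frac{P^3(\mu,\nu)}{M_G(\nu)}-1 \;=\; \frac{1}{\dim\mu\,\dim\nu\,\dim(\widetilde V)^3}\sum_{g\neq e}\overline{\chi_\nu(g)}\chi_\mu(g)\chi_{\widetilde V}(g)^3.
\]
Using $|\chi_\mu(g)|\leq\dim\mu$ and $|\chi_\nu(g)|\leq\dim\nu$ trivially, it suffices to show $\sum_{g\neq e}|\chi_{\widetilde V}(g)|^3=o(\dim(\widetilde V)^3)$. Factoring out the $\ell^\infty$ norm and using $\sum_g|\chi_{\widetilde V}(g)|^2=|G|\cdot|\mathrm{supp}(V)|$ together with the Plancherel lower bound $\dim\widetilde V\geq\sqrt{|G|\,M_G(V)}$ reduces this to the $\ell^\infty$ estimate on $\chi_{\widetilde V}(g)/\dim\widetilde V$ for $g\neq e$ that the proof of Theorem~\ref{thm:22infty} extracts from criterion~\ref{item:nosmallconj}.

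For the converse, criterion~\ref{item:nosmallconj} supplies some $h\neq e$ with $|h^G|=O(1)$; let $N=\langle h^G\rangle$, a nontrivial normal subgroup of $G$ (the edge case $N=G$, giving cyclic quotients like $\mathbb{Z}/p$, is handled separately via short sums of linear characters). Take $V$ to be the regular representation of $G/N$ pulled back to $G$, so every constituent of $\widetilde V$ is trivial on $N$. Clifford theory then forces every irreducible constituent of $\mu\otimes\widetilde V$ to share the $N$-restriction $G$-orbit of $\mu$, so $P$ preserves this $N$-restriction partition of $\Irrep(G)$ and is reducible. Starting from $\mu$ factoring through $G/N$, the process is confined to a block of Plancherel mass $1/|N|\leq 1/2$, so total variation distance to $M_G$ stays $\geq 1/2>1/4$ for every $t$; the $1/4$-mixing time is thus infinite.

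The main obstacle is the forward direction: the crude column-orthogonality bound $|\chi_{\widetilde V}(g)|^2\leq|\mathrm{supp}(V)|\cdot|C_G(g)|$ loses a factor of $|\mathrm{supp}(V)|$, which can be of order $|G|/K$ where $K$ is the minimum nontrivial conjugacy class size, and so is inadequate alone. Matching the three-step uniform mixing in exactly the regime promised hinges on invoking the sharper $\ell^\infty$ estimate developed inside the proof of Theorem~\ref{thm:22infty}, where the quantitative tensor-quasi-random hypothesis enters.
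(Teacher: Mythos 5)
Your forward direction is essentially the paper's argument, repackaged: the paper splits the character inner product at $g=e$ and applies H\"older with $\ell^2\times\ell^2\times\ell^\infty$ plus Lemma~\ref{lem:cG}, and you do the same after expanding $P^3(\mu,\nu)/M_G(\nu)-1$ as a character sum over $g\neq e$. One caveat: the reduction you sketch quotes ``$\dim\widetilde V\geq\sqrt{|G|\,M_G(V)}$,'' which is too weak; what you need (and what is in fact an equality) is $\dim\widetilde V=\sum_{\llambda\in V}\dim(\llambda)^2=|G|\,M_G(V)$. With the square-root lower bound alone the ratio $\sum_{g\neq e}|\chi_{\widetilde V}(g)|^3/\dim(\widetilde V)^3$ does not tend to $0$, so this line needs to be replaced by the exact value before the $\ell^\infty$ bound from Lemma~\ref{lem:cG} closes the argument.

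The converse direction has a genuine gap. You take $N=\langle h^G\rangle$ for a small conjugacy class $h^G$ and let $V$ be the regular representation of $G/N$; then $M_G(V)=1/|N|$. But $N$ need not be small: a bounded conjugacy class can generate an arbitrarily large proper normal subgroup. For example in $G=D_{2p}$ ($p$ an odd prime) the class $\{r,r^{-1}\}$ of a rotation has size $2$ yet generates $N=\langle r\rangle\cong\mathbb Z/p$, so your $V$ has $M_G(V)=1/p=o(1)$. This is not the edge case $N=G$ you flag; $N$ is a proper subgroup, just a big one. Since the corollary's converse must match the hypothesis $M_G(V)=\Omega(1)$ in the forward direction (otherwise it is vacuous), a representation with vanishing Plancherel mass does not witness failure of mixing in the intended sense. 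The paper sidesteps this entirely by invoking the contrapositive of Theorem~\ref{thm:TQR}, item~\ref{item:tensorpower}, whose proof handles exactly this situation: when the obstruction is a \emph{constant-index} normal subgroup $N$ with nontrivial center $K$, one builds $V=\bigoplus_{\theta\in A}\Ind_K^G\theta$ from a carefully chosen $G/N$-invariant set $A\subseteq K^*$ (via the additive-combinatorial Lemma~\ref{lem:sumset}) so that $M_G(V)=\Omega_m(1)$ while $M_G(V^{\otimes m})\leq 1/2$. Your Clifford-theoretic reducibility observation is correct for the reps you construct, but without that more refined choice of $V$ the converse does not go through in cases like $D_{2p}$.
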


\subsection{Preliminaries}

\begin{defn}

Direct sum and tensor product of class functions are defined by element-wise sum/product (the notation is chosen to emphasize the underlying representations). For any character $\chi$ we denote by $\chi_0$ the function $\chi_0(g)=1_{g\neq e}\cdot \chi(g)$, so that $\chi(g)=\chi(e)\cdot 1_{g=e}+\chi_0(g)$.

\end{defn}

In \cite{qrgroups}, subsets $S\subseteq G$ correspond to their characteristic functions $1_{S}(x)=1_{x\in S}$, and Fourier analysis is performed on these functions. For us, the corresponding ``right" version of a $G$-representation $V$ is the \emph{reduced representation}. Throughout the paper we identify $G$-representations with their set of isomorphism classes of irreducible representations. We will use $\ell^p$ norms $|\cdot|_p$ on functions $f:G\to\mathbb C$ with counting measure on $G$, so that $|f|_{p}=\left(\sum_{g\in G}|f(g)|^p\right)^{1/p}.$ 

\begin{defn}
\label{defn:reduced}
For $V$ a $G$-representation, the corresponding reduced representation is given by

\[\widetilde V=\bigoplus_{\llambda\in V}\dim(\llambda)\cdot\llambda.\]

Hence $\widetilde V$ depends only on the set of distinct irreducibles contained in $V$, and takes $\Irrep(G)$ to the regular representation. We also define the reduced character function $\widetilde{\chi}^V:G\to\mathbb C$ via:

\[\widetilde{\chi}^V(g)=\frac{1}{|G|}\cdot \chi^{\widetilde V}(g)=\frac{1}{|G|}\sum_{\llambda\in\Irrep(G)} 1_{\llambda\in V}\cdot\dim(\llambda)\cdot \chi^{\llambda}(g).\]

\end{defn}

The regular representation has reduced character $1_{g=e}$. In general $\widetilde{\chi}^V(e)=M_G(V)$ and $|\widetilde{\chi}^V(e)|_{2}=\sqrt{M_G(V)}$. The following covering criterion shows that if the $\ell^1$ mass of a character $\chi$ is concentrated on $\chi(e)$, then the corresponding representation must contain all irreducibles.

\begin{lem}

Let $V$ be a $G$-representation and let $\chi:G\to \mathbb C$ be a class function in the $\mathbb C$-linear span of $\{\chi^{\llambda}|\llambda\in V\}$. Suppose also that $|\chi(e)|>\sum_{g\in G\backslash\{e\}}|\chi(g)|.$ Then $S=\Irrep(G)$.

\end{lem}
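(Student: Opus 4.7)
The plan is to argue by contradiction: assume there is some irreducible $\mmu\in\Irrep(G)$ that is not a subrepresentation of $V$, and derive a violation of the hypothesis $|\chi(e)|>\sum_{g\neq e}|\chi(g)|$. The key tool is orthogonality of irreducible characters under the standard Hermitian inner product $\langle f_1,f_2\rangle=\frac{1}{|G|}\sum_{g\in G}f_1(g)\overline{f_2(g)}$ on class functions.

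Concretely, I would first observe that because $\chi$ lies in the span of $\{\chi^{\llambda}:\llambda\in V\}$ and $\chi^{\mmu}$ is orthogonal to each such $\chi^{\llambda}$ (since $\mmu\notin V$), we have $\langle\chi,\chi^{\mmu}\rangle=0$. Splitting this sum into the identity term and the rest yields the identity
\[
\chi(e)\,\overline{\chi^{\mmu}(e)} \;=\; -\sum_{g\in G\setminus\{e\}}\chi(g)\,\overline{\chi^{\mmu}(g)}.
\]
Next I would take absolute values and use the two elementary character bounds $\chi^{\mmu}(e)=\dim(\mmu)\geq 1$ and $|\chi^{\mmu}(g)|\leq \dim(\mmu)$ for all $g\in G$ (the latter because $\chi^{\mmu}(g)$ is a sum of $\dim(\mmu)$ roots of unity). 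Dividing by $\dim(\mmu)>0$ gives $|\chi(e)|\leq\sum_{g\neq e}|\chi(g)|$, contradicting the strict inequality in the hypothesis. Hence every $\mmu\in\Irrep(G)$ must actually lie in $V$.

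There is no real obstacle here; the argument is purely an application of character orthogonality combined with the standard pointwise bound $|\chi^{\mmu}(g)|\leq\chi^{\mmu}(e)$. The statement is really the representation-theoretic dual of the trivial observation that if $\widehat{1_S}$ is concentrated at $0\in\widehat{G}$, then $S$ must equal all of $G$, and this is the exact analog used in place of the Fourier-inversion criterion of \cite{qrgroups}.
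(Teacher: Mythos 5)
Your proof is correct and is essentially the same as the paper's: both hinge on the decomposition of $\langle\chi,\chi^{\mmu}\rangle$ into the identity term plus the rest, together with the bound $|\chi^{\mmu}(g)|\leq\chi^{\mmu}(e)=\dim(\mmu)$. The only difference is presentational — the paper argues directly that $\langle\chi,\chi^{\llambda}\rangle\neq 0$ for every $\llambda\in\Irrep(G)$, whereas you assume a missing $\mmu$ and derive a contradiction from $\langle\chi,\chi^{\mmu}\rangle=0$; these are logically equivalent and use identical estimates.
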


\begin{proof}

We show $\langle \chi,\chi^{\llambda}\rangle\neq 0$ for any $\llambda\in\Irrep(G)$. Indeed,  

\[\langle \chi,\chi^{\llambda}\rangle = \chi(e)\chi^{\llambda}(e)+\langle \chi_0,\chi^{\llambda}_0\rangle.\]

Now by assumption $|\chi(e)|>|\chi_0|_{\ell^1}$, and since $\llambda$ is a genuine $G$-representation $|\chi^{\llambda}(e)|\geq |\chi^{\llambda}_0|_{\ell^\infty}.$ Therefore 

\[|\langle \chi_0,\chi^{\llambda}_0\rangle|\leq |\chi_0|_1\cdot |\chi^{\llambda}_0|_{\infty}<|\chi(e)\chi^{\llambda}(e)|.\]

Hence $\langle \chi,\chi^{\llambda}\rangle\neq 0$ completing the proof.

\end{proof}

\section{Covering $Irrep(G)$ when all Conjugacy Classes are Large}

In this section we prove the implication $\ref{item:nosmallconj}\implies \ref{item:tensorprod}$ of Theorem~\ref{thm:TQR}. 

As a warmup we reprove a result from our previous work \cite{sellke20} on covering $\Irrep(G)$ by a tensor product of two representations. We consider this a dual to the trivial statement that if $A_1,A_2\subseteq G$ satisfy $|A_1|+|A_2|>|G|$ then $A_1A_2=G$. 

\begin{thm}[\cite{sellke20}]

Suppose $M_G(V_1)+M_G(V_2)>1$. Then $V_1\otimes V_2$ covers $\Irrep(G)$. 

\end{thm}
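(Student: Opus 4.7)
The plan is to apply the covering lemma just proved to the class function
\[h(g) \;=\; \widetilde{\chi}^{V_1}(g)\cdot\widetilde{\chi}^{V_2}(g),\]
taking $V=V_1\otimes V_2$. The reason $h$ is an allowed test function is that pointwise multiplication of irreducible characters encodes tensor product: each $\chi^{\lambda_1}\chi^{\lambda_2}$ with $\lambda_i\in V_i$ is a non-negative integer combination of characters of irreducibles of $V_1\otimes V_2$, so $h$ (a positive combination of such products) lies in the $\mathbb C$-span of $\{\chi^\nu:\nu\in V_1\otimes V_2\}$.

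First I would compute $h(e)=M_G(V_1)M_G(V_2)$ from $\widetilde{\chi}^{V_i}(e)=M_G(V_i)$. Then I would estimate the tail $\sum_{g\neq e}|h(g)|$ via Cauchy--Schwarz on $G\setminus\{e\}$, using the Plancherel-style identity $|\widetilde{\chi}^{V_i}|_2^2=M_G(V_i)$ noted just before the covering lemma. Subtracting the identity contribution gives
\[\sum_{g\neq e}|\widetilde{\chi}^{V_i}(g)|^2 \;=\; M_G(V_i)\bigl(1-M_G(V_i)\bigr),\]
so Cauchy--Schwarz yields
\[\sum_{g\neq e}|h(g)| \;\leq\; \sqrt{M_G(V_1)\bigl(1-M_G(V_1)\bigr)\,M_G(V_2)\bigl(1-M_G(V_2)\bigr)}.\]

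To finish, I would compare $h(e)$ with this bound: squaring the required inequality $h(e)>\sum_{g\neq e}|h(g)|$ and dividing through by the positive factor $M_G(V_1)M_G(V_2)$ reduces it to $M_G(V_1)M_G(V_2) > (1-M_G(V_1))(1-M_G(V_2))$, which expands to exactly $M_G(V_1)+M_G(V_2)>1$, the hypothesis. The covering lemma then concludes $V_1\otimes V_2=\Irrep(G)$. I expect essentially no obstacle here; the only subtle choice is to use the reduced character normalization $\widetilde{\chi}^{V_i}$, which calibrates the identity $|\widetilde{\chi}^{V_i}|_2^2=\widetilde{\chi}^{V_i}(e)$ so that Cauchy--Schwarz meets the hypothesis with no slack whatsoever.
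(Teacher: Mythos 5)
Your proposal is correct and is essentially the paper's proof: both apply the covering lemma to $\widetilde{\chi}^{V_1}\otimes\widetilde{\chi}^{V_2}$, compute the value at $e$ as $M_G(V_1)M_G(V_2)$, bound the tail $\ell^1$ mass by Cauchy--Schwarz using $\sum_{g\neq e}|\widetilde{\chi}^{V_i}(g)|^2=M_G(V_i)(1-M_G(V_i))$, and observe that the resulting inequality is equivalent to $M_G(V_1)+M_G(V_2)>1$. The only cosmetic difference is that you spell out why the product of reduced characters lies in the span of $\{\chi^\nu:\nu\in V_1\otimes V_2\}$, a point the paper leaves implicit.
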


\begin{proof}

For convenience take $a_i=M_G(V_i)$. Then

\[\left(\widetilde{\chi}^{V_1}\otimes \widetilde{\chi}^{V_2}\right)(g)=a_1a_2\cdot 1_{g=e}+\widetilde{\chi}^{V_1}_0\otimes \widetilde{\chi}^{V_2}_0(g).\]

The Cauchy-Schwarz inequality implies:

\[|\widetilde{\chi}^{V_1}_0\otimes \widetilde{\chi}^{V_2}_0|_1\leq |\widetilde{\chi}^{V_1}_0|_2 |\widetilde{\chi}^{V_2}_0|_2= \sqrt{(a_1-a_1^2)(a_2-a_2^2)}.\]

The assumption $a_1+a_2>1$ implies $a_1>1-a_2$ and $a_2>1-a_1$. Therefore $a_1a_2>(1-a_1)(1-a_2)$, and so $a_1^2a_2^2>(a_1-a_1^2)(a_2-a_2^2).$ Therefore $\left(\widetilde{\chi}^{V_1}\otimes \widetilde{\chi}^{V_2}\right)(e)>0$, concluding the proof.

\end{proof}












Let $c(G)$ denote the minimal size of a non-trivial conjugacy class in $G$. We now show that tensor triple products cover $\Irrep(G)$ when $c(G)$ is large. The idea is to use Holder's inequality with $\frac{1}{2}+\frac{1}{2}+\frac{1}{\infty}=1$.

\begin{lem}\label{lem:cG}

Let $V$ be a $G$-representation, then $|\widetilde{\chi}^V_0|_{\infty}\leq c(G)^{-1/2}.$

\end{lem}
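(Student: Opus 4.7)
The plan is to bound $|\widetilde{\chi}^V_0(g)|$ for a fixed $g \neq e$ by Cauchy--Schwarz, splitting the defining sum into a ``dimension'' factor controlled by the Plancherel mass of $V$, and a ``character value'' factor controlled by column orthogonality.

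First I would fix $g \in G \setminus \{e\}$ and write out
\[
\widetilde{\chi}^V_0(g) = \widetilde{\chi}^V(g) = \frac{1}{|G|}\sum_{\llambda \in V}\dim(\llambda)\cdot \chi^{\llambda}(g),
\]
treating this as an inner product of the two sequences $(\dim(\llambda))_{\llambda \in V}$ and $(\chi^{\llambda}(g))_{\llambda \in V}$. Applying Cauchy--Schwarz gives
\[
|\widetilde{\chi}^V(g)|^2 \;\leq\; \frac{1}{|G|^2}\left(\sum_{\llambda \in V}\dim(\llambda)^2\right)\left(\sum_{\llambda \in V}|\chi^{\llambda}(g)|^2\right).
\]

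Next I would estimate each factor. By the definition of the Plancherel measure, $\sum_{\llambda \in V}\dim(\llambda)^2 = |G|\cdot M_G(V) \leq |G|$. For the second factor, extending the sum over all of $\Irrep(G)$ only increases it, and the column orthogonality relation yields
\[
\sum_{\llambda \in \Irrep(G)}|\chi^{\llambda}(g)|^2 = |C_G(g)| = \frac{|G|}{|\mathrm{class}(g)|}.
\]
Since $g \neq e$, its conjugacy class has size at least $c(G)$, so the second factor is at most $|G|/c(G)$.

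Combining the two estimates collapses the $|G|$ factors and gives $|\widetilde{\chi}^V(g)|^2 \leq 1/c(G)$, hence $|\widetilde{\chi}^V_0|_\infty \leq c(G)^{-1/2}$. The argument is essentially routine once one spots the right Cauchy--Schwarz split; the only ``idea'' is recognizing that the sum of squared character values at a fixed non-identity element is exactly the centralizer size, which is what converts the conjugacy-class lower bound into an $\ell^\infty$ bound on the reduced character. I do not expect any real obstacle.
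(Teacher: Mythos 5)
Your proof is correct, but it takes a genuinely different route from the paper's. The paper's argument is a two-step $\ell^2$-to-$\ell^\infty$ passage on the group side: first it notes that $|\widetilde{\chi}^V_0|_2 \leq 1$ (immediate from $|\widetilde{\chi}^V|_2 = \sqrt{M_G(V)} \leq 1$, which is row orthogonality of characters), and then it observes that $\widetilde{\chi}^V_0$ is a class function vanishing at $e$, so each of its values is repeated on a conjugacy class of size at least $c(G)$; hence $c(G)\,|\widetilde{\chi}^V_0(g)|^2 \leq |\widetilde{\chi}^V_0|_2^2 \leq 1$. You instead fix $g \neq e$ and apply Cauchy--Schwarz directly to the defining sum over $\llambda \in V$, bounding one factor by $|G|\,M_G(V) \leq |G|$ and the other by column orthogonality, $\sum_\llambda |\chi^\llambda(g)|^2 = |C_G(g)| \leq |G|/c(G)$. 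In effect the paper exploits row orthogonality plus conjugation-invariance of class functions, while you exploit column orthogonality plus Cauchy--Schwarz over $\Irrep(G)$. Both are short and correct; the paper's phrasing makes explicit the general principle (a class function of bounded $\ell^2$ norm supported off the identity has $\ell^\infty$ norm at most $c(G)^{-1/2}$) that it highlights in the introduction as the key $\ell^\infty$-smallness mechanism, whereas your version is a more direct pointwise estimate that a reader who hasn't yet internalized that slogan might find easier to verify.
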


\begin{proof}

Certainly $|\widetilde{\chi}^V_0|_2\leq 1$. Moreover $\widetilde{\chi}^V_0$ is constant on conjugacy classes, all of which have size at least $c(G)$. This implies the claim.

\end{proof}

\begin{thm}

\label{thm:22infty}

For representations $V_1,V_2,V_3$ of $G$, if \[M_G(V_1)M_G(V_2)M_G(V_3)> c(G)^{-1/2}\] then $V_1\otimes V_2\otimes V_3$ contains all irreducible representations of $G$. 

\end{thm}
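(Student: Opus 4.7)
The plan is to apply the covering lemma from the preliminaries to the product class function $\chi = \widetilde{\chi}^{V_1}\cdot\widetilde{\chi}^{V_2}\cdot \widetilde{\chi}^{V_3}$. Up to the scalar $|G|^{-3}$ this is the character of $\widetilde{V_1}\otimes\widetilde{V_2}\otimes\widetilde{V_3}$, and since $\widetilde{V_i}$ has the same set of irreducible constituents as $V_i$, the function $\chi$ lies in the $\mathbb{C}$-linear span of $\{\chi^{\llambda}\mid \llambda\in V_1\otimes V_2\otimes V_3\}$. So it suffices to verify $|\chi(e)|>\sum_{g\neq e}|\chi(g)|$.

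Writing $a_i=M_G(V_i)$ and using the decomposition $\widetilde{\chi}^{V_i}(g)=a_i\cdot 1_{g=e}+\widetilde{\chi}^{V_i}_0(g)$, all cross-terms in the triple product vanish off of $e$ (because they carry a $1_{g=e}$ factor) and vanish at $e$ (because $\widetilde{\chi}^{V_i}_0(e)=0$). Hence $\chi(e)=a_1a_2a_3$ while for $g\neq e$ one has $\chi(g)=\widetilde{\chi}^{V_1}_0(g)\widetilde{\chi}^{V_2}_0(g)\widetilde{\chi}^{V_3}_0(g)$. Applying H\"older's inequality with exponents $(2,2,\infty)$ as the remark preceding the theorem suggests,
\[
\sum_{g\neq e}|\chi(g)| \;=\; \bigl|\widetilde{\chi}^{V_1}_0\,\widetilde{\chi}^{V_2}_0\,\widetilde{\chi}^{V_3}_0\bigr|_1 \;\leq\; \bigl|\widetilde{\chi}^{V_1}_0\bigr|_2\cdot\bigl|\widetilde{\chi}^{V_2}_0\bigr|_2\cdot\bigl|\widetilde{\chi}^{V_3}_0\bigr|_\infty.
\]

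For the two $\ell^2$ factors I would invoke Parseval: by the Plancherel computation already noted in the paper, $|\widetilde{\chi}^{V_i}|_2^2=a_i$, so $|\widetilde{\chi}^{V_i}_0|_2^2=a_i-a_i^2\leq a_i$. For the $\ell^\infty$ factor Lemma~\ref{lem:cG} gives $|\widetilde{\chi}^{V_3}_0|_\infty\leq c(G)^{-1/2}$. The resulting estimate is $\sum_{g\neq e}|\chi(g)|\leq \sqrt{a_1a_2}\cdot c(G)^{-1/2}$, and I want this to be strictly less than $a_1a_2a_3$. Since each $a_i\in[0,1]$, $\sqrt{a_1a_2}\geq a_1a_2$, so the hypothesis $a_1a_2a_3>c(G)^{-1/2}$ yields $a_3\sqrt{a_1a_2}\geq a_1a_2a_3>c(G)^{-1/2}$, i.e.\ $\sqrt{a_1a_2}\cdot c(G)^{-1/2}<a_1a_2a_3=\chi(e)$. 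The covering lemma then finishes the argument.

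I do not anticipate a genuine obstacle: the two-paragraph computation above is essentially forced once one commits to the $(2,2,\infty)$ H\"older split, and the statement of the theorem is tuned so that the Plancherel slack $\sqrt{a_1a_2}\geq a_1a_2$ is exactly what the trivial dimension bound gives on the identity. The only point to be careful about is that the class function one plugs into the covering lemma is a product of \emph{reduced} characters, so that the ``mass at $e$'' is $a_1a_2a_3$ rather than a product of irrep dimensions; this is what makes the Plancherel measure, rather than dimension, the natural notion of size for the $V_i$.
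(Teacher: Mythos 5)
Your proof is correct and follows the paper's own argument: the same $(2,2,\infty)$ H\"older split applied to the reduced characters, with Lemma~\ref{lem:cG} supplying the $\ell^\infty$ factor, followed by the covering lemma from the preliminaries. The only difference is cosmetic: you track the sharper Parseval bound $|\widetilde{\chi}^{V_i}_0|_2^2=a_i-a_i^2\le a_i$ and then need a small extra manipulation to recover $\sqrt{a_1a_2}\,c(G)^{-1/2}<a_1a_2a_3$, whereas the paper simply uses $|\widetilde{\chi}^{V_i}_0|_2\le 1$ and compares $c(G)^{-1/2}$ directly against the hypothesis, so your detour is harmless but unnecessary.
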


\begin{proof}

We consider the character values on $e\in G$ and on all other values. $\widetilde{\chi}^{V_i}(e)=M_G(V_i)$ and so \[\widetilde{\chi}^{V_1}\otimes \widetilde{\chi}^{V_2}\otimes \widetilde{\chi}^{V_3}(e)=M_G(V_1)M_G(V_2)M_G(V_3).\] On the other hand, Holder's inequality and Lemma~\ref{lem:cG} imply:

\[|\widetilde{\chi}^{V_1}_0\otimes \widetilde{\chi}^{V_2}_0\otimes \widetilde{\chi}^{V_3}_0|_1 \leq |\widetilde{\chi}^{V_1}_0|_2\cdot |\widetilde{\chi}^{V_2}_0|_2\cdot |\widetilde{\chi}^{V_3}_0|_{\infty}
\leq c(G)^{-1/2}.\]

\end{proof}

This establishes the implication $\ref{item:nosmallconj}\implies \ref{item:tensorprod}$ of Theorem~\ref{thm:TQR}. In fact examination of the proof above shows slightly more, in analogy with Theorem 3.3 of \cite{qrgroups}. Let $V_1,V_2,V_3$ be representations of a TQR group $G$ with Plancherel measures $M_G(V_i)=a_i$. Then the multiplicity of an irreducible $\llambda$ in $\widetilde V_1\otimes \widetilde V_2\otimes \widetilde V_3$ is $|G|^2(a_1a_2a_3 \dim(\llambda)+|\widetilde{\chi}^{V_1}_0\otimes \widetilde{\chi}^{V_2}_0\otimes \widetilde{\chi}^{V_3}_0|_1)$, where the last term is at most $c(G)^{-1/2}$. Hence in a TQR group, such a tensor product of reduced representations contains every irreducible $\llambda$ a number of times approximately proportional to $\dim(\llambda)$. The $L^{\infty}$ mixing time result is a slight generalization of this argument.

\section{Remaining Proofs for Theorem~\ref{thm:TQR}}

The implication $\ref{item:tensorprod}\implies \ref{item:tensorpower}$ of Theorem~\ref{thm:TQR} is clear. We now show the implication $\ref{item:subgroup}\implies \ref{item:nosmallconj}$ of Theorem~\ref{thm:TQR}, and in the next subsection show $\ref{item:tensorpower} \implies \ref{item:subgroup}$. 

\begin{proof}[Proof of Theorem~\ref{thm:TQR}, implication $\ref{item:subgroup}\implies \ref{item:nosmallconj}$]

We go by contradiction and assume $G$ contains neither a small non-trivial conjugacy class $C$ at most $k=O(1)$ in size, nor a normal subgroup at most $k!$ in size. We will show that $G$ contains a constant-index normal subgroup with non-trivial center.

First, if $k=1$ then $G$ itself has nontrivial center so we may assume $k>1$. Then $G$ acts on $C$ by conjugation; this defines a non-trivial homomorphism $\phi:G\to S_k.$ Therefore $N=\ker\phi\trianglelefteq G$ is a normal subgroup of $G$ commuting with each element of $C$. Its index is $|G/N|\leq k!$. The subgroup $H$ generated by $C$ is normal (as $C$ is a conjugacy class) and commutes with $N$. Since $H$ is a normal subgroup of $G$, by assumption $|H|>k!$. Let $K=H\cap N$. We have 

\[|K|\geq \frac{|H||N|}{|G|}\geq  \frac{|H|}{k!} >1.\]

Since $H$ and $N$ commute, $K$ is central in $N$. Therefore $G$ contains a constant index subgroup $N$ with nontrivial center $K$. This shows $\ref{item:subgroup}\implies \ref{item:nosmallconj}$.

\end{proof}

\subsection{Tensor power condition implies normal subgroup condition}

To prove the implication $\ref{item:tensorpower} \implies \ref{item:subgroup}$ we begin with two preparatory lemmas in additive combinatorics. Throughout, if $n$ is a positive integer and $B$ is a subset of an abelian group, we denote by $nB$ the $n$-fold sum $B+B+\dots+B$.

\begin{lem}

\label{lem:additive combo}

Let $k,m,n$ be positive integers and let $B\subseteq K$ be a subset of an abelian group $K$ with size $|B|=k+1$. Then $mnB$ can be covered by $(10km)^{k}$ translates of $nB$. In particular $\frac{|mnB|}{|nB|}\leq (10km)^{k}.$

\end{lem}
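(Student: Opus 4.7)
The plan is to reduce to a combinatorial covering problem in the free abelian group $\mathbb{Z}^k$, and then construct an explicit covering by rounding. After translating $B$ so that $0 \in B$, we may write $B = \{0, b_1, \ldots, b_k\}$. The homomorphism $\phi : \mathbb{Z}^k \to K$ sending the $i$th standard basis vector to $b_i$ carries the discrete simplex $S_N := \{c \in \mathbb{Z}_{\geq 0}^k : c_1 + \cdots + c_k \leq N\}$ surjectively onto $NB$ for each $N \geq 0$, since $c \in S_N$ encodes the sum obtained by taking $b_i$ a total of $c_i$ times and $0$ the remaining $N - \sum c_i$ times. Thus it suffices to cover $S_{mn} \subseteq \mathbb{Z}^k$ by $(10km)^k$ translates of $S_n$, since applying $\phi$ then produces the required covering of $mnB$ in $K$.

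To build such a covering, I set $L := \max(1, \lfloor n/k \rfloor)$ and, for each $c \in S_{mn}$, define $t_i := L \lfloor c_i/L \rfloor$ and $r_i := c_i - t_i \in [0, L-1]$, so that $c = t + r$. The residual $r$ lies in $S_n$: if $L = 1$ then $r = 0$, while if $L \geq 2$ (which forces $n \geq 2k$) one has $kL \leq n$, so $\sum r_i \leq k(L-1) \leq n$. To count the distinct $t$'s, observe $\sum (t_i/L) = \sum \lfloor c_i/L \rfloor \leq mn/L$. When $n < 2k$, $L = 1$ and $mn/L = mn < 2mk$; when $n \geq 2k$, $L \geq n/(2k)$ and hence $mn/L \leq 2mk$. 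Either way $(t_1/L, \ldots, t_k/L) \in S_{2mk}$, so the set $T$ of possible values of $t$ satisfies $|T| \leq |S_{2mk}| = \binom{2mk+k}{k} \leq (e(2m+1))^k < (10km)^k$. Applying $\phi$ produces at most $(10km)^k$ translates of $nB$ covering $mnB$, and the ``in particular'' clause follows from $|mnB| \leq |T| \cdot |nB|$.

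The only genuine decision is the choice of $L$: it must be large enough that the coarsened lattice $L\mathbb{Z}^k$ keeps $|T|$ of order $m^k$ rather than $n^k$, yet small enough that the residues $r_i$ respect the combined budget $\sum r_i \leq n$. The choice $L \approx n/k$ is the unique natural balance, and the inequality $kL \leq n$ is exactly what makes the residual computation work; once $L$ is fixed, the rest is direct estimation.
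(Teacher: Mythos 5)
Your proof is correct. Both you and the paper first reduce to the universal model $\mathbb{Z}^k$ with $B$ consisting of $0$ and the $k$ standard basis vectors (you do this via the homomorphism $\phi$; the paper phrases it as ``it suffices to treat $B=\{0,e_1,\dots,e_k\}$''). The finishing step differs. The paper sandwiches the simplices between cubes---$nB\supseteq\{0,\dots,j-1\}^k$ with $j=1+\lfloor n/k\rfloor$ and $mnB\subseteq\{0,\dots,J-1\}^k$ with $J=mn+1$---and then uses the trivial fact that a cube of side $J$ is covered by $\lceil J/j\rceil^k$ translates of a cube of side $j$. You instead cover the simplex $S_{mn}$ directly by translates of $S_n$ via a lattice-rounding decomposition $c=t+r$ with spacing $L\approx n/k$, and bound the number of translate origins by $|S_{2mk}|=\binom{k(2m+1)}{k}$ before relaxing to $(10km)^k$. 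Your approach avoids the inner/outer cube approximation and yields a sharper intermediate bound; the paper's is slightly shorter because the cube-by-cube covering count is immediate. All the estimates in your version check out: $kL\le n$ gives $r\in S_n$ when $L\ge 2$ (and $r=0$ when $L=1$), $L\ge n/(2k)$ when $n\ge 2k$ gives $\sum t_i/L\le 2mk$, and $\binom{k(2m+1)}{k}\le\bigl(e(2m+1)\bigr)^k<(10km)^k$.
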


\begin{proof}

It suffices to show the result when $B=\{0,e_1,\dots,e_{k}\}$ consists of zero and standard basis vectors in $K=\mathbb Z^k$. Indeed given any such identification, the resulting homomorphism from $\mathbb Z^{k}\to K$ transfers a covering inside $\mathbb Z^k$ to a covering inside $K$. Observe that $nB\supseteq \{0,\dots,
j-1\}^{k}$ for $j=1+\lfloor \frac{n}{k}\rfloor$, while $mnB\subseteq \{0,\dots,J-1\}^{k}$ for $J=mn+1$. As $J\leq 10kmj$ the result follows.

\end{proof}

\begin{lem}

\label{lem:sumset}

Let $K$ be a finite non-trivial abelian group admitting an action of automorphisms by a group $L$ of size $k$. Then there exists a subset $A\subseteq K$ with $|A|=\Omega_{k,m}(|K|)$ and $|mA|\leq \frac{|K|}{2}$.

\end{lem}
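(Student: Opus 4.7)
The plan is to produce $A$ explicitly from the structure theorem for finite abelian groups; the $L$-action does not enter, since the lemma as stated only asks for the existence of a subset rather than an $L$-invariant one. I would write $K \cong \bigoplus_{i=1}^{r} \mathbb{Z}/n_i\mathbb{Z}$ in invariant-factor form with $n_1\mid\cdots\mid n_r$ and each $n_i \geq 2$, and set $K' = \bigoplus_{i<r}\mathbb{Z}/n_i\mathbb{Z}$, so $K = K'\oplus\mathbb{Z}/n_r\mathbb{Z}$.

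I would then split into two regimes according to the top invariant factor $n_r$. If $n_r < 4m$, I take $A$ to be the index-$n_r$ subgroup $K' \oplus \{0\}$: then $|A| = |K|/n_r \geq |K|/(4m)$, and because $A$ is a subgroup $mA = A$, giving $|mA| \leq |K|/2$ immediately. If instead $n_r \geq 4m$, I take $A = K' \oplus I$ with $I = \{0,1,\ldots,\lfloor n_r/(4m)\rfloor\} \subseteq \mathbb{Z}/n_r\mathbb{Z}$ an arithmetic progression. Then $|A| \geq |K|/(4m)$, and the sumset $mI$ is contained in $\{0,1,\ldots,m\lfloor n_r/(4m)\rfloor\} \subseteq \{0,\ldots,\lfloor n_r/4\rfloor\}$; in particular no wraparound occurs in $\mathbb{Z}/n_r\mathbb{Z}$, and $|mA| \leq |K|/4 + |K'| \leq |K|/2$ using $n_r \geq 4$.

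In both regimes $|A| \geq |K|/(4m) = \Omega_m(|K|) \subseteq \Omega_{k,m}(|K|)$, which is the required bound. The only step requiring any care is the arithmetic bookkeeping with the floors in the second case, to verify that $mI$ fits strictly below $n_r/2$; this is immediate once $n_r \geq 4m$. I would also note that Lemma~\ref{lem:additive combo} is not used here, but it seems designed for an $L$-invariant refinement of the statement, which is plausibly what the downstream application really needs: one would then start from $B = \{0\} \cup Lb$ with $b$ chosen so that $\langle B\rangle$ is large, select $n^*$ maximal subject to $|n^*B| \leq |K|/(2(10km)^k)$, take $A = n^*B$, and use the doubling consequence $|(n^*{+}1)B| \leq (20k)^k|n^*B|$ of Lemma~\ref{lem:additive combo} to force $|A| = \Omega_{k,m}(|K|)$ while Lemma~\ref{lem:additive combo} directly gives $|mA| \leq |K|/2$. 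This refinement is where the true difficulty would lie, since one must also argue that some $b$ exists with $\langle Lb\rangle$ of size $\Omega(|K|)$, a statement that itself requires inducting through $L$-invariant subgroups.
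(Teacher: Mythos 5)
Your explicit construction via the invariant-factor decomposition is correct and proves the literal statement; it is also considerably more elementary than the paper's proof, which builds $A$ by an iterative orbit-accretion algorithm and controls $|mA|$ via Lemma~\ref{lem:additive combo}. The floor arithmetic in your second case checks out: $m\lfloor n_r/(4m)\rfloor\leq n_r/4$, there is no wraparound, and $|mA|\leq|K'|(1+n_r/4)\leq|K|/4+|K|/4$ once $n_r\geq 4$.

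However, you have also correctly diagnosed the real issue: the lemma as printed omits the conclusion that $A$ may be taken \emph{$L$-invariant}, and that stronger conclusion is exactly what is used downstream --- the proof of Theorem~\ref{thm:TQR}, implication \ref{item:tensorpower}$\implies$\ref{item:subgroup}, invokes the lemma to produce $A\subseteq K^*$ ``invariant under the action $\varphi$ of $G/N$'' and then uses that $\varphi$-invariance to show $\widetilde\chi^V$ vanishes off $K$. (The appearance of $L$ and $k$ in the hypothesis and in $\Omega_{k,m}$ would be pointless otherwise, as your bound $|A|\geq|K|/(4m)$ has no $k$-dependence.) Your set $K'\oplus I$ is not $L$-invariant in general, so your proof, while a valid proof of what is literally written, cannot be substituted at the point of application. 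Your sketch of the $L$-invariant refinement points in the right direction, but the difficulty you flag --- finding a single $b$ with $\langle Lb\rangle$ of size $\Omega(|K|)$ --- is precisely what the paper's argument avoids: the algorithm never commits to one generator. It grows $A$ by unioning orbit-translates $A+La$ (which preserves $L$-invariance), and whenever $A$ saturates to a subgroup below the target size it restarts with a fresh $a\in K\setminus A$; the bound $|mA|\leq(10km)^k|A|$ is preserved at each stage by applying Lemma~\ref{lem:additive combo} to $B=\{0\}\cup La_t$ after quotienting by the saturated subgroup $A_{t-1}$. So no single orbit need generate a large subgroup.
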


\begin{proof}

Fix a small, positive value $\varepsilon<\frac{1}{(10km)^{k+1}}.$ We construct $A$ using the following iterative algorithm initialized with $A=\{e\}$ and $a\in K\backslash \{e\}$. Throughout, for $a\in K$, we denote by $La\subseteq K$ the orbit of $a$ under the action of $L$.

\begin{enumerate}
    \item If $A\neq A+La$, update $A\leftarrow A\cup (A+La)$.
    \item If $A=A+La$, update $a$ to any element of $K\backslash A$ and return to step $1$.
    \item At the first time that $|A|\geq \varepsilon|K|$, terminate and output the current set $\widehat A$.
\end{enumerate}

This algorithm repeats multiple iterations of step $1$ interrupted by single iterations of step $2$ until terminating in step $3$. To show that the outputed set $\widehat A$ satisfies the conditions of the lemma it remains to show that $mA\subsetneq K$. First observe that the size $|A|$ grows by a factor of at most $k+1$ at each iteration because $|La|\leq |L|\leq k$ for each $a\in K$. 

We claim that all times, $|mA|\leq (10km)^k|A|$ holds. Indeed let $a_1,a_2,\dots,a_t$ be the sequence of values of $a$ so far in the algorithm, and let $A_{t-1}$ be the set $A$ at the time that $a\leftarrow a_t$ was updated. Then $A_{t-1}$ is exactly the subgroup generated by the sets $La_i$ for $i\leq t-1$. Letting $B=La_t\cup e$, the current set $A$ may be written as

\[A=A_{t-1}+nB\]

for some non-negative integer $n$, where $B=e\cup La$ has size $|B|=k+1$. Since $A_{t-1}\subseteq K$ is a subgroup, we may set $\psi:K\to K/A_{t-1}$ to be the natural quotient map. Then Lemma~\ref{lem:additive combo} yields $\frac{|mA|}{|A|}=\frac{|nm\psi(B)|}{|\psi(nB)|}\leq (10km)^k$ proving the claim.

Since $|A|$ grows by a factor of at most $k+1$ each iteration, as long as $|K|\geq \varepsilon^{-1}$ the output set $\widehat A$ satisfies $\frac{|\widehat{A}|}{|K|}< \frac{1}{10\cdot(10km)^k}.$ Therefore $|m\widehat A|< \frac{|K|}{10}$ as desired. On the other hand if $|K|\leq  \varepsilon^{-1}$ then step $3$ immediately outputs $\widehat A=\{e\}$. In this case $|m\widehat A|=1$, which also concludes the proof as $K$ is assumed to be non-trivial.

\end{proof}

We now complete the proof of Theorem~\ref{thm:TQR}.

\begin{proof}[Proof of Theorem~\ref{thm:TQR}, implication $\ref{item:tensorpower}\implies \ref{item:subgroup}$]

It is easy to see that if $G$ has a constant size normal subgroup $N$, then condition \ref{item:tensorpower} fails - simply take $V$ to be the regular representation on the quotient $G/N$, viewed as a $G$-representation. Then in $V^{\otimes m}$, $N$ will still act trivially so $V^{\otimes m}$ cannot cover $\Irrep(G)$. Moreover $M_G(V)=\frac{1}{|N|}=\Omega(1)$ because irreducible $G/N$ representations are also irreducible $G$-representations. Therefore $V$ contradicts condition \ref{item:tensorpower}. 

For the main part of the proof, take $N\trianglelefteq G$ with $|G/N|=O(1)$ and with non-trivial center $K\subseteq N$. We will construct a $G$-representation $V$ with $M_G(V)=\Omega(1)$ such that $M_G(V^{\otimes m})\leq \frac{1}{2}$. The idea is to construct $G$-representations from $K$-characters via induction. Letting $\theta:K\to\mathbb C^{\times}$ be a multiplicative character, we define $V_{\theta}=\Ind_K^N\theta$. From the Mackey formula and centrality of $K$ in $N$,

\[\chi^{V_{\theta}}(g)=\left\{\begin{array}{lr}
        0, & \text{for } g\notin K\\
        \frac{|N|}{|K|}\cdot \theta(g), & \text{for } g\in K
        \end{array}\right\}\]

It follows that the induced representations $V_{\theta}$ remain mutually orthogonal and satisfy $\bigoplus_{\theta\in K^*}V_{\theta} = \chi^{Reg}_N.$ This implies that they are of the form \[V_{\theta}:=\bigoplus_{\llambda\in \Lambda_{\theta}\subseteq \Irrep(N)} (\dim \llambda)\llambda\] where $\{\Lambda_{\theta}:\theta\in K^*\}$ is a partition of $\Irrep(N)$. Hence their $N$-Plancherel measures satisfy \[M_N(V_{\theta})=\frac{\dim(V_{\theta})}{|N|}=\frac{1}{|K|}.\]

We induct again from $N$ to $G$, obtaining $G$-representations

\[W_{\theta}:=\Ind_K^G \theta=\Ind_N^G V_{\theta}.\]

Observe that any inner automorphism on $G$ restricts to an automorphism on $N$, hence preserves its center $K$. Therefore conjugation defines a group homomorphism $\varphi_0:G\to Aut(K)$. $\varphi_0(n)$ acts trivially for any $n\in N$ so we obtain a quotient map $\varphi:G/N\to Aut(K)$. The Mackey formula again takes a simple form. Writing $\varphi_g(\theta)(\cdot)=\theta(\varphi(g)(\cdot))$ we may view $\varphi$ as a $G/N$ action on $K^*$. Then

\begin{equation}\label{eq:ind2}\chi^{W_{\theta}}(k)=\frac{|N|}{|K|} \sum_{g_i\in G/N} \varphi_{g_i}(\theta)(k).\end{equation}

Partition the set $K^*$ of $K$-characters into equivalence classes $(S_i)_{i=1}^j$ by the action of $G/N$. \eqref{eq:ind2} implies that if $\theta,\theta'$ are in the same equivalence class then $W_{\theta}=W^{\theta'}$, while if not then $W_{\theta},W^{\theta'}$ have orthogonal characters. Let $W^{S_i}=W_{\theta}$ if $\theta\in S_i$. Next observe that $\bigoplus_{\theta\in K^*} \theta\simeq V^{Reg}_K$ and so \[\bigoplus_{i\leq j} |S_i|\cdot W^{S_i}=\bigoplus_{\theta\in K^*}W_{\theta}=\Ind_K^G V^{Reg}_K=V^{Reg}_G.\] Since the representations $|S_i|\cdot W^{S_i}$ have pairwise orthogonal characters and their direct sum is the regular representation, they must be of the form 

\[|S_i|\cdot W^{S_i}=\sum_{\llambda\in \Lambda_i\subseteq \Irrep(G)}(\dim\llambda)\llambda\]

for some partition $\{\Lambda_i:i\in [j]\}$ of $\Irrep(G)$. Therefore $M_G(W^{S_i})=\frac{|S_i|\dim(W^{S_i})}{|G|}=\frac{|S_i|}{|K|}.$ Altogether for any $\varphi$-invariant subset $S\subseteq K^*$, letting $V^S=\bigoplus_{\theta\in S} \theta$ we have 

\[M_G(\Ind_K^G(V^S))=M_K(V^S).\]

By Lemma~\ref{lem:sumset}, there exists $A\subseteq K^*$ invariant under the action $\varphi$ of $G/N$ of size $|A|=\Omega_{k,m}(|K|)$, and which also satisfies $|mA|\leq \frac{|K^*|}{2}$. We take $V=\bigoplus_{\theta\in A}W_{\theta}$. It is not difficult to see by $\varphi$-invariance of $A$ that $\widetilde{\chi}^V(g)$ vanishes outside of $K$, and that for any $k\in K$:

\[\widetilde{\chi}^{V}(k)=\frac{|N|}{|G|}\sum_{\theta\in A} \theta(k).\]

Hence the $m$-th tensor power of this character is in the $\mathbb C$-linear span of $\{\chi^{W_{\theta}}|\theta\in mA\}$. As a result, $M_G(V^{\otimes m})\leq \frac{1}{2}$, completing the proof.
\end{proof}

\section{Application to Tensor Product Markov Chains}\label{sec:markov}

Here we prove Corollary~\ref{cor:mix} on tensor product Markov chains. We consider the chains $\cdot\otimes \widetilde V$ for $V$ a $G$-representation, where the reduced representation $\widetilde V$ is as in Definition~\ref{defn:reduced}. To take a step in these Markov chains, from a starting representation $\llambda$ one samples from the irreducible subrepresentations of $\llambda\otimes \widetilde V$, weighted by the product of their multiplicity and dimension. The Plancherel measure is a stationary distribution for any such Markov chain, and the distribution $p_t(\llambda,\cdot)$ after $t$ steps can be generated by sampling in the same way from the irreducible subrepresentations of $\llambda\otimes (\widetilde V)^{\otimes t}$. Recall also that the $\ell^{\infty}$ (or uniform) $\varepsilon$-mixing time is given by $\inf\left\{t\geq 0:\max_{\llambda,\mmu}\left|\frac{p_t(\llambda,\mmu)}{M_G(\mmu)}-1\right|\leq\varepsilon\right\}$ while the $\ell^1$ (or total variation) $\varepsilon$-mixing time is $\inf\left\{t\geq 0:\max_{\llambda,\mmu}\left|p_t(\llambda,\mmu)-M_G(\mmu)\right|\leq\varepsilon\right\}$. There are several other related definitions for mixing times, such as the $\ell^p$ mixing time and the separation distance mixing time. Among all of these choices, uniform mixing time is the largest while total variation mixing time is the smallest. Therefore in the setting of Corollary~\ref{cor:mix}, when $G$ is TQR all of these chains mix within $3$ steps, while when $G$ is not TQR none of these chains are guaranteed to mix within $O(1)$ steps.

\begin{proof}[Proof of Corollary~\ref{cor:mix}]

We begin with the first statement. Let $\llambda,V$ be $G$-representations with $\llambda$ irreducible and $V$ (without loss of generality) reduced. Then:

\[|\widetilde{\chi}^{V^{\otimes 3}}_0\otimes \widetilde{\chi}^{\llambda_i}_0|_1\leq M_{G}(\llambda)\cdot |\widetilde{\chi}^{V^{\otimes 3}}_0|_1 \leq |\widetilde{\chi}^{V}_0|_2\cdot |\widetilde{\chi}^{V}_0|_2\cdot |\widetilde{\chi}^{V}_0|_{\infty}
\leq M_{G}(\llambda_i)\cdot c(G)^{-1/2}.\]

On the other hand,

\[\widetilde{\chi}^{V^{\otimes 3}}(e)\widetilde{\chi}^{\llambda}(e)=M_{G}(\llambda)\cdot M_{G}(V)^3.\]

Let $p_3(\llambda,\mmu)$ denote the probability to reach $\mmu$ from $\llambda$ in exactly $3$ steps. Then with $\propto$ indicating proportionality as $\mmu$ varies over $\Irrep(G)$,

\[p_3(\llambda,\mmu) \propto \langle \widetilde{\chi}^{\mmu},\widetilde \chi^{V^{\otimes 3}}\otimes \chi^{\llambda}\rangle
\propto M_{G}(\mmu)M_{G}(\llambda)M_{G}(V)^3+\langle \widetilde{\chi}^{\mmu}_0,\widetilde \chi^{V^{\otimes 3}}_0\otimes \chi^{\llambda}_0\rangle.\]

As $M_{G}(V)=\Omega(1)$ and $G$ is TQR, it follows that $c(G)^{-1/2}=o(M_{G}(V)^3).$ Recall:

\[\langle \widetilde{\chi}^{\mmu}_0,\widetilde \chi^{V^{\otimes 3}}_0\otimes \chi^{\llambda}_0\rangle \leq |\widetilde{\chi}^{\mmu}_0|_{\infty}|\widetilde \chi^{V^{\otimes 3}}_0\otimes \chi^{\llambda}_0|_1\leq M_{G}(\mmu) M_{G}(\llambda)\cdot c(G)^{-1/2} .\]

We conclude that for $c(G)$ large,

\[p_3(\llambda,\mmu) \propto M_{G}(\mmu)(1+o(1)).\]

This implies the uniform mixing time result. For total variation non-mixing, we recall Theorem~\ref{thm:TQR}, assertion $\ref{item:tensorpower}$. Taking the contrapositive we see that if $G$ is not TQR, then there is a $G$-representation $V$ with $M_{G}(V)=\Omega_m(1)$ but $M_G(V^{\otimes m})\leq \frac{1}{2}$ for any fixed $m$. Taking as starting point $\llambda$ the trivial representation,  after $m$ steps at least half of $\Irrep(G)$ is still completely inaccessible to the tensor product Markov chain. Such a distribution must have large total variation distance from the Plancherel measure stationary distribution, which concludes the proof.

\end{proof}

\section{Relations between TQR and quasi-random groups}

Here we make some comments on the structure of TQR and quasi-random groups. We observe that any quasi-random group has a quotient which is both quasi-random and TQR, but there seems to be no analogous general way to go from a TQR group to a quasi-random group.

\begin{cor}

Any large center-free quasi-random group $G$ is TQR.

\end{cor}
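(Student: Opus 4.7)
The plan is to verify criterion~\ref{item:subgroup} of Theorem~\ref{thm:TQR} directly, since Theorem~\ref{thm:TQR} then supplies tensor quasi-randomness. That criterion has two parts: (a) $G$ has no $O(1)$-size nontrivial normal subgroup, and (b) $G$ has no $O(1)$-index normal subgroup (possibly equal to $G$) with nontrivial center.

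For part (a), I would argue by contradiction. If $N \trianglelefteq G$ were nontrivial with $|N|=O(1)$, then the conjugation action of $G$ on $N$ gives a homomorphism $G \to \operatorname{Aut}(N)$ whose target has bounded size. Its image is a quotient of $G$ of size $O(1)$, which quasi-randomness (via condition~4 of Theorem~\ref{thm:QR}) forces to be trivial once $|G|$ is sufficiently large. Hence $G$ centralizes $N$, so $N \subseteq Z(G)$, contradicting the center-freeness of $G$.

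For part (b), suppose $N \trianglelefteq G$ has $[G:N]=O(1)$ and $Z(N)\neq\{e\}$. If $N$ is a proper subgroup, then $G/N$ is a nontrivial $O(1)$-size quotient, again contradicting quasi-randomness. If instead $N=G$, then $Z(G)=Z(N)$ is nontrivial, contradicting center-freeness. Either way we reach a contradiction.

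I do not expect any real obstacle here: the argument is essentially a matching exercise between the hypotheses and criterion~\ref{item:subgroup}. Center-freeness precisely removes the ``$N=G$'' escape clause in part (b), while the ``no small quotient'' half of quasi-randomness kills both the image of $G$ acting on a putative small normal subgroup in part (a) and the quotient $G/N$ in part (b). Notably the abelian-quotient half of quasi-randomness is not needed for this implication.
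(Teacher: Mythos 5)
Your proof is correct, but it verifies a different criterion of Theorem~\ref{thm:TQR} than the paper does. The paper argues by contradiction through criterion~\ref{item:nosmallconj}: if $G$ is not TQR it has a nontrivial conjugacy class $C$ of bounded size $k$; center-freeness forces $k>1$, so the conjugation action $G\to S_k$ is nontrivial, giving a proper kernel $N$ with $|G/N|\leq k!$, i.e.\ a bounded nontrivial quotient, contradicting quasi-randomness. You instead verify criterion~\ref{item:subgroup} directly. Both parts of your argument are sound: in (a), a bounded nontrivial normal $N$ yields a bounded quotient $G/\ker(G\to\operatorname{Aut}(N))$, which quasi-randomness forces to be trivial, pushing $N$ into $Z(G)$ and contradicting center-freeness; in (b), a proper bounded-index normal subgroup again gives a bounded nontrivial quotient, and the $N=G$ case is killed directly by center-freeness. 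The two approaches are parallel in spirit — both convert a small structure into a small quotient via a conjugation-type action and then invoke the ``no small quotient'' half of quasi-randomness — but the paper's route is a bit shorter since checking criterion~\ref{item:nosmallconj} is a single step, whereas criterion~\ref{item:subgroup} has two clauses. Your version has the virtue of making explicit how center-freeness exactly plugs the ``$N=G$'' hole in the subgroup criterion, and how the abelian-quotient half of quasi-randomness is irrelevant here, which is a nice observation. Also worth noting: the paper's in-text proof of $\ref{item:subgroup}\implies\ref{item:nosmallconj}$ uses essentially the same $\operatorname{Aut}$/$S_k$ device, so your argument is morally a re-derivation of that implication specialized to the quasi-random setting.
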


\begin{proof}

If $G$ is not TQR, then it contains a nontrivial conjugacy class of size $k=O(1)$. As $G$ is center-free, $k>1$. Therefore we obtain a nontrivial conjugacy action $G\to S_k$ with nontrivial normal kernel $N\subsetneq G$. Then $|G/N|\leq k!$, so $G$ contains a constant-sized quotient, contradicting quasi-randomness.

\end{proof}

\begin{cor}
\label{cor:QRtoTQR}
Any large quasi-random group $G$ has a quotient $H$ of super-constant size which is simultaneously quasi-random and TQR.

\end{cor}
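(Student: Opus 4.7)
The plan is to exhibit a quotient $H$ of $G$ that is center-free, quasi-random, and of super-constant size; the previous corollary will then immediately imply that $H$ is TQR. The natural candidate is $H = G/Z_\infty(G)$, where $Z_\infty(G)$ denotes the hypercenter of $G$, i.e.\ the eventually stable term of the upper central series defined by $Z_0 = \{e\}$ and $Z_{i+1}/Z_i = Z(G/Z_i)$. By construction $H$ is center-free. Quasi-randomness is inherited by quotients: any irreducible representation of $H$ pulls back to an irreducible representation of $G$ of the same dimension, so the lower bound on nontrivial irrep dimensions transfers from $G$ to $H$.

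The only substantive point is to check that $|H|$ is super-constant. Suppose for contradiction that $|H|$ remains bounded along the quasi-random family. Then $H$ is an $O(1)$-size quotient of $G$, and by condition~$4$ of Theorem~\ref{thm:QR} this forces $H$ to be trivial, i.e.\ $G = Z_\infty(G)$. For a finite group this is equivalent to $G$ being nilpotent. But a nontrivial nilpotent finite group has nontrivial abelianization $G/[G,G]$, yielding a nontrivial abelian quotient of $G$ and again contradicting quasi-randomness. Hence $|H| \to \infty$.

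With $H$ center-free, quasi-random, and of super-constant size, the previous corollary applies to show $H$ is TQR for $|H|$ sufficiently large. The main (and essentially only) non-routine step is the group-theoretic observation that a quasi-random finite group cannot be nilpotent, which rules out the degenerate case $G = Z_\infty(G)$; the rest is routine bookkeeping about the upper central series and lifting of representations through quotients.
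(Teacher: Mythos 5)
Your proof is correct and follows essentially the same approach as the paper: quotienting by the hypercenter is exactly the paper's ``repeatedly quotient out the center until a center-free quotient is reached,'' and both proofs then invoke the previous corollary and condition~4 of Theorem~\ref{thm:QR}. The only cosmetic difference is that you rule out $H$ being trivial via the characterization $G=Z_\infty(G)\Leftrightarrow G$ nilpotent (and nilpotent groups have nontrivial abelianization), whereas the paper argues directly that the last quotient before reaching triviality would have to be abelian; these are equivalent observations.
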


\begin{proof}

Begin from $G$ and repeatedly quotient out the center until a center-free quotient $H$ of $G$ is reached. Being quasi-random $G$ contains no non-trivial abelian quotient, so $|H|>1$ as reaching $|H|=1$ requires coming from an abelian quotient in the previous step. $G$ also contains no constant-size non-trivial quotient, implying $|H|=\omega(1)$ is super-constant. Since all quotients of $H$ are quotients of $G$, condition 4 of Theorem~\ref{thm:QR} implies that $H$ is also quasi-random. Moreover $H$ is center-free, hence it is TQR as well.

\end{proof}

\begin{prop}

Let $G=\mathbb F_p\rtimes \mathbb F_p^*$ be the group of affine bijections $x\to ax+b$ on $\mathbb F_p$. Then $G$ is TQR, but none of its subgroups or quotients are quasi-random.

\end{prop}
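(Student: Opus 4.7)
The plan is to verify TQR via criterion~\ref{item:subgroup} of Theorem~\ref{thm:TQR} and to rule out quasi-randomness for every subgroup and quotient via criterion 4 of Theorem~\ref{thm:QR}. Both tasks reduce to a classification of (normal) subgroups of $G$, so I would establish that classification first. Write elements of $G$ as pairs $(b,a)\in\mathbb F_p\times\mathbb F_p^*$ with product $(b_1,a_1)(b_2,a_2)=(b_1+a_1b_2,\,a_1a_2)$, and let $T\cong\mathbb F_p$ denote the translation subgroup $\{(b,1):b\in\mathbb F_p\}$.

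For the classification: since $T$ is an irreducible $\mathbb F_p^*$-module under conjugation, for any $N\trianglelefteq G$ one has $N\cap T\in\{\{0\},T\}$. A short commutator computation gives $(c,1)(b,a)(c,1)^{-1}(b,a)^{-1}=(c(1-a),1)$, which forces $N\cap T=T$ as soon as $N$ contains any element with $a\neq 1$. Hence the normal subgroups of $G$ are exactly $\{e\}$ and the groups $T\rtimes H$ for $H\leq\mathbb F_p^*$. For a general subgroup $K\leq G$, if $K\cap T=\{0\}$ then the projection $\pi|_K:K\to\mathbb F_p^*$ is injective, so $K$ is cyclic; if instead $T\subseteq K$, then $K=T\rtimes K'$ for $K'=\pi(K)$.

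Granting this, TQR follows at once. The smallest nontrivial normal subgroup is $T$ itself, of size $p\to\infty$, so there is no bounded nontrivial normal subgroup. Every constant-index normal subgroup is $N=T\rtimes H$ with $[\mathbb F_p^*:H]=O(1)$ and hence $|H|\to\infty$; the centrality equation $b(1-a')=b'(1-a)$ for all $(b',a')\in N$, applied first with some $a'\in H\setminus\{1\}$ and $b'=0$ and then with some $b'\neq 0$, forces $(b,a)=(0,1)$, so such an $N$ is center-free. This verifies criterion~\ref{item:subgroup}. For the failure of quasi-randomness, each subgroup $K$ is either cyclic or has abelian quotient $K'$, so it is abelian, bounded, or has a nontrivial abelian quotient, and therefore fails criterion 4 of Theorem~\ref{thm:QR}. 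Every proper quotient of $G$ is $\mathbb F_p^*/H'$ and so abelian, and $G$ itself surjects onto $\mathbb F_p^*$, so no quotient of $G$ is quasi-random either.

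The only genuinely nontrivial ingredient is the commutator identity underpinning the normal-subgroup classification; once that is in place, everything else is routine bookkeeping against the two criteria.
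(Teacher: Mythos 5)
Your proof is correct, but it is organized around a different backbone than the paper's. You first classify all subgroups and normal subgroups of $G=\mathbb F_p\rtimes\mathbb F_p^*$ via the commutator identity $[(c,1),(b,a)]=(c(1-a),1)$ and the primality of $|T|$, and then read off all three assertions from that classification: TQR is verified against criterion~\ref{item:subgroup} of Theorem~\ref{thm:TQR} (no small nontrivial normal subgroup; every bounded-index normal subgroup $T\rtimes H$ is center-free, as your calculation $b(1-a')=b'(1-a)$ shows), and failure of quasi-randomness for subgroups and quotients follows because every subgroup is cyclic or has abelian image in $\mathbb F_p^*$, and every proper quotient is a quotient of $\mathbb F_p^*$. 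The paper instead invokes criterion~\ref{item:nosmallconj} for TQR (the nontrivial conjugacy classes of $G$ have size $p-1$ or $p$, so there is nothing to compute about normal subgroups), handles subgroups by restricting the map $G\to\mathbb F_p^*$ without a full classification, and rules out quasi-random quotients by a short representation-theoretic dimension count: $G$ has irreducibles only of dimensions $1$ and $p-1$, and any nontrivial quotient is too small to support the $(p-1)$-dimensional one, hence is abelian. Your route is more uniform and self-contained (one structural classification feeds everything), while the paper's is shorter at each step, especially for TQR, and its quotient argument is a nice illustration of the dimension-counting technique; both are valid, and the bookkeeping in your version (centrality equation, the $N\cap T\in\{\{0\},T\}$ dichotomy, handling $N=G$ in the index-$O(1)$ case) all checks out.
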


\begin{proof}

The TQR property follows by considering conjugacy classes so we focus on the latter assertions. First we show that any non-trivial subgroup $H$ of $G$ has a non-trivial abelian quotient, implying $G$ has no quasi-random subgroup. This holds because restriction of the quotient map $G\to\mathbb F_p^*$ defined by $(ax+b)\to a$ gives an abelian quotient unless $H$ is contained in the set $\{x+b\}$. In the latter case $H$ is abelian already. 

Next we show that $G$ cannot have a quasi-random quotient, and in fact that all non-trivial quotients of $G$ are abelian. Indeed any irreducible representation of a quotient $G/N$ pulls back to an irreducible representation of $G$. As $G$ has order $p^2-p$ and has irreducible representations of dimension only $p-1$ and $1$, simple size considerations imply only the $1$-dimensional irreducible $G$-representations can be irreducible representations of a non-trivial quotient. Therefore any quotient of $G$ is abelian, concluding the proof.

\end{proof}

\small

\bibliographystyle{alpha}

\bibliography{main}

\def\cprime{$'$}
\begin{thebibliography}{BGGT13}

\bibitem[BDLT19]{markov}
Georgia Benkart, Persi Diaconis, Martin~W Liebeck, and Pham~Huu Tiep.
\newblock Tensor product markov chains.
\newblock {\em Journal of Algebra}, 2019.

\bibitem[BGGT13]{breuillard2013expansion}
Emmanuel Breuillard, Ben Green, Robert Guralnick, and Terence Tao.
\newblock Expansion in finite simple groups of {Lie} type.
\newblock {\em arXiv preprint arXiv:1309.1975}, 2013.

\bibitem[BNP08]{babai2008product}
L{\'a}szl{\'o} Babai, Nikolay Nikolov, and L{\'a}szl{\'o} Pyber.
\newblock Product growth and mixing in finite groups.
\newblock In {\em Proceedings of the nineteenth annual ACM-SIAM symposium on
  Discrete algorithms}, pages 248--257. Society for Industrial and Applied
  Mathematics, 2008.

\bibitem[BV12]{bourgain2012expansion}
Jean Bourgain and P{\'e}ter~P Varj{\'u}.
\newblock Expansion in {$SL_d(\mathbb Z/q\mathbb Z)$}, $q$ arbitrary.
\newblock {\em Inventiones mathematicae}, 188(1):151--173, 2012.

\bibitem[Ebe16]{eberhard2016product}
Sean Eberhard.
\newblock Product mixing in the alternating group.
\newblock {\em Discrete Analysis}, page 610, 2016.

\bibitem[Ful04]{fulman2}
Jason Fulman.
\newblock Card shuffling and the decomposition of tensor products.
\newblock {\em Pacific journal of mathematics}, 217(2):247--262, 2004.

\bibitem[Ful08]{fulman}
Jason Fulman.
\newblock Convergence rates of random walk on irreducible representations of
  finite groups.
\newblock {\em Journal of Theoretical Probability}, 21(1):193, 2008.

\bibitem[Ful10]{fulman3}
Jason Fulman.
\newblock Separation cutoffs for random walk on irreducible representations.
\newblock {\em Annals of Combinatorics}, 14(3):319--337, 2010.

\bibitem[Gow08]{qrgroups}
William~T Gowers.
\newblock Quasirandom groups.
\newblock {\em Combinatorics, Probability and Computing}, 17(3):363--387, 2008.

\bibitem[Hel08]{helfgott2008growth}
Harald~Andr{\'e}s Helfgott.
\newblock Growth and generation in ${SL_2(\mathbb Z/p\mathbb Z)}$.
\newblock {\em Annals of Mathematics}, pages 601--623, 2008.

\bibitem[HSTZ13]{steinbergsquare}
Gerhard Heide, Jan Saxl, Pham~Huu Tiep, and Alexandre~E Zalesski.
\newblock Conjugacy action, induced representations and the steinberg square
  for simple groups of lie type.
\newblock {\em Proceedings of the London Mathematical Society},
  106(4):908--930, 2013.

\bibitem[Ike15]{IkenDom}
Christian Ikenmeyer.
\newblock The {S}axl conjecture and the dominance order.
\newblock {\em Discrete Math.}, 338(11):1970--1975, 2015.

\bibitem[Li18]{li2018}
Xin Li.
\newblock Saxl conjecture for triple hooks.
\newblock {\em arXiv preprint arXiv:1811.10967}, 2018.

\bibitem[LPS88]{lubotzky1988ramanujan}
Alexander Lubotzky, Ralph Phillips, and Peter Sarnak.
\newblock Ramanujan graphs.
\newblock {\em Combinatorica}, 8(3):261--277, 1988.

\bibitem[LS17]{LuoSellke}
Sammy Luo and Mark Sellke.
\newblock The {Saxl} conjecture for fourth powers via the semigroup property.
\newblock {\em Journal of Algebraic Combinatorics}, 45(1):33--80, 2017.

\bibitem[LST20]{liebeck}
M~Liebeck, A~Shalev, and PH~Tiep.
\newblock On the diameters of mckay graphs for finite simple groups.
\newblock {\em Israel Journal of Mathematics}, 2020.

\bibitem[LST21]{liebeck2021}
M~Liebeck, A~Shalev, and PH~Tiep.
\newblock Mckay graphs for alternating and classical groups.
\newblock {\em Transactions of the American Mathematical Society}, 2021.

\bibitem[Mar88]{margulis1988explicit}
Grigorii~Aleksandrovich Margulis.
\newblock Explicit group-theoretical constructions of combinatorial schemes and
  their application to the design of expanders and concentrators.
\newblock {\em Problemy peredachi informatsii}, 24(1):51--60, 1988.

\bibitem[PPV16]{Pak}
Igor Pak, Greta Panova, and Ernesto Vallejo.
\newblock Kronecker products, characters, partitions, and the tensor square
  conjectures.
\newblock {\em Advances in Mathematics}, 288:702--731, 2016.

\bibitem[PS16]{pyber2016growth}
L{\'a}szl{\'o} Pyber and Endre Szab{\'o}.
\newblock Growth in finite simple groups of lie type.
\newblock {\em Journal of the American Mathematical Society}, 29(1):95--146,
  2016.

\bibitem[Sel20]{sellke20}
Mark Sellke.
\newblock Covering {Irrep}$({S}_n)$ with tensor products and powers.
\newblock {\em arXiv preprint arXiv:2004.05283}, 2020.

\end{thebibliography}

\appendix

\section{Statement of Theorem~\ref{thm:QR}}

Here we justify our statement of Theorem~\ref{thm:QR}. In \cite[Theorems 3.3, 4.5, 4.6, 4.8]{qrgroups} the following statements are shown to be equivalent for a finite group $G$:

\begin{enumerate}[label=(\Alph*)]
    \item $G$ has no $O(1)$-dimensional non-trivial irreducible representations.\label{it:A}
    \item If $A_1,A_2,A_3\subseteq G$ each have size $\Omega(|G|)$, then $a_1a_2=a_3$ has a solution in $(a_1,a_2,a_3)\in A_1\times A_2\times A_3.$\label{it:B}
    \item $G$ has neither an $O(1)$-size nor an abelian non-trivial quotient.\label{it:C}
\end{enumerate}

It is easy to see that Assertion~\ref{it:B} is equivalent to assertion $2$ of Theorem~\ref{thm:QR}. As Assertion $2$ of Theorem~\ref{thm:QR} clearly implies Assertion 3, it only remains to show that Assertion $3$ of Theorem~\ref{thm:QR} implies one of the others. We will show it implies Assertion $1$. Going by contrapositive, we suppose we are given a nontrivial homomorphism $\varphi:G\to U(k)$ for $k=O(1)$. Take the set $A$ to be the preimage by $\varphi$ of a small neighborhood of the identity in $U(k)$. Then $A^m$ contains no element with negative trace, but $\varphi(G)$ must contain such elements as the average trace of $\varphi(g)$ for $g\in G$ uniformly random is $0$. Moreover a simple volume argument on cosets of $A$ shows that $\frac{|A|}{|G|}=\Omega_{k,m}(1)$. This proves Assertion $1$ of Theorem~\ref{thm:QR} from Assertion $3$.

\end{document}